\newcommand{\ind}{{1\hspace{-1mm}{\rm I}}}
\newcommand{\N}{\mathbb{N}}
\newcommand{\R}{\mathbb{R}}
\newcommand{\E}{\mathbb{E}}
\newtheorem{theorem}{Theorem}[section]
\theoremstyle{definition}
\theoremstyle{remark}
\begin{document}

\sloppy
\title[Infinite divisibility of random fields]{Infinite divisibility of random fields admitting an integral representation with an infinitely divisible integrator}

\author{Wolfgang Karcher}
\address{Wolfgang Karcher, Ulm University, Institute of Stochastics, Helmholtzstr. 18, 89081 Ulm, Germany}
\email{wolfgang.karcher\@@{}uni-ulm.de}
\author{Hans-Peter Scheffler}
\address{Hans-Peter Scheffler, University of Siegen, Fachbereich 6, Mathematik, Emmy-Noether-Campus, Walter-Flex-Str. 3, 57068 Siegen, Germany}
\email{scheffler\@@{}mathematik.uni-siegen.de}
\author{Evgeny Spodarev}
\address{Evgeny Spodarev, Ulm University, Institute of Stochastics, Helmholtzstr. 18, 89081 Ulm, Germany}
\email{evgeny.spodarev\@@{}uni-ulm.de}

\date{7 October 2009}

\begin{abstract}
Let $\Lambda$ be an infinitely divisible random measure. We consider random fields of the form
$$ X(t) = \int_{\R^d} f_t(x) \Lambda(dx), \quad t \in \R^q, \quad d,q \geq 1,$$
where $f_t:\R^d \to \R$ is $\Lambda$-integrable for all $t \in \R^d$. We show that $X$ is an infinitely divisible random field, that is the law of the random vector $(X(t_1),...,X(t_n))$ is an infinitely divisible probability measure on $\R^n$ for all $t_1,...,t_n \in \R^q$.
\end{abstract}

\keywords{random field, infinitely divisible}

\maketitle

\baselineskip=18pt

\section{Preliminaries}

We start with the definition of an infinitely divisible random measure. Following \cite{HPVJ08}, let $R$ be a Borel subset of $\R^d$, $\mathcal{B}(R)$ be the Borel sets contained in $R$, and $\mathcal{S}$ be the $\delta$-ring (a ring closed under countable intersections) of bounded subsets of $R$. Let $\Lambda = \{\Lambda(A), A \in \mathcal{S}\}$ be a stochastic process with the following three properties.
\begin{itemize}
 \item $\Lambda$ is \textit{independently scattered}: If $\{A_n\}_{n \in \N} \subset \mathcal{S}$ is a sequence of disjoint sets, then the random variables $\Lambda(A_n)$, $n \in \N$, are independent.
 \item $\Lambda$ is \textit{$\sigma$-additive}: If $\{A_n\}_{n \in \N} \subset \mathcal{S}$ is a sequence of disjoint sets and $\bigcup\limits_n A_n \in \mathcal{S}$, then
	$$\Lambda(\bigcup_n A_n) = \sum_n \Lambda(A_n) \quad a.s.$$
 \item $\Lambda(A)$ is an infinitely divisible random variable for each $A \in \mathcal{S}$, i.~e. $\Lambda(A)$ has the law of the sum of $n$ independent identically distributed random variables for any $n \geq 1$.
\end{itemize}
Then $\Lambda$ is called \textit{infinitely divisible random measure}.

We now consider the \textit{cumulant function} $C_{\Lambda(A)}(t) = \ln(\E e^{it\Lambda(A)})$ of $\Lambda(A)$ for a set $A$ in $\mathcal{S}$ which is given by the \textit{L\'evy-Khintchine representation}
$$C_{\Lambda(A)}(t) = ita(A) - \frac{1}{2}t^2b(A) + \int_\R \left(e^{itr}-1-it\tau(r)\right)F(dr,A),$$
where $a$ is a $\sigma$-additive set function on $\mathcal{S}$, $b$ is a measure on $\mathcal{B}(R)$, and $F(dr,A)$ is a measure on $\mathcal{B}(R)$ for fixed $dr$ and a \textit{L\'evy measure} on $\mathcal{B}(\R)$ for each fixed $A \in \mathcal{B}(R)$, that is $F(\{0\},A) = 0$ and $\int_\R \min\{1,r^2\} F(dr,A) < \infty$, and $\tau(r) = r \ind_{[-1,1]}(r)$. $F$ is a measure and referred to as the \textit{generalized L\'evy measure} and $(a,b,F)$ is called \textit{characteristic triplet}.

Let $|a| = a^+ + a^-$.The measure $\lambda$ with
\begin{equation*}
 \lambda(A) := |a|(A) + b(A) + \int_\R \min\{1,r^2\} F(dr,A), \quad A \in \mathcal{S},
\end{equation*}
is called \textit{control measure} of the infinitely divisible random measure $\Lambda$.

Let $f_t:\R^d \to \R$, $d \geq 1$, be \textit{$\Lambda$-integrable} for all $t \in \R^q$, $q \geq 1$, that is there exists a sequence of simple functions $\{\tilde{f}_t^{(n)}\}_{n \in \N}$, $\tilde{f}_t^{(n)}:\R^d \to \R$, $t \in \R^q$, such that
\begin{enumerate}
 \item $\tilde{f}_t^{(n)} \to f_t \quad \lambda-\text{a.e.},$
 \item for every Borel set $B \in \R^d$, the sequence $\{\int\limits_{B} \tilde{f}_t^{(n)}(x) \Lambda(dx)\}_{n \in \N}$ converges in probability.
\end{enumerate}
For all $t \in \R^q$, we define
$$ \int\limits_{\R^d} f_t(x) \Lambda(dx) := \underset{n \to \infty}{\text{plim}} \int\limits_{\R^d} \tilde{f}_t^{(n)}(x) \Lambda(dx),$$
where $\underset{n \to \infty}{\text{plim}}$ means convergence in probability (see \cite{RR89}), and consider random fields of the form
\begin{equation}
 X(t) = \int\limits_{\R^d} f_t(x) \Lambda(dx), \quad t \in \R^q. \label{eq:int}
\end{equation}

\section{Main result}

\begin{theorem}
 The random field $X$ is infinitely divisible, that is the law of the random vector $(X(t_1),...,X(t_n))^\mathsf{T}$ is an infinitely divisible probability measure on $\R^n$ for all $t_1,...,t_n \in \R^q$.
\end{theorem}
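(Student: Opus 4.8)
The plan is to reduce the assertion to a finite-dimensional one, prove it first for simple integrands, and then pass to the limit using the closedness of the class of infinitely divisible laws under weak convergence. Fix $t_1,\dots,t_n \in \R^q$ and set $\mathbf{X} = (X(t_1),\dots,X(t_n))^{\mathsf{T}}$. By the definition of $\Lambda$-integrability, for each $j$ there is a sequence of simple functions $\tilde f_{t_j}^{(k)} \to f_{t_j}$ $\lambda$-a.e.\ with $\int_{\R^d} \tilde f_{t_j}^{(k)}(x)\,\Lambda(dx) \to X(t_j)$ in probability as $k \to \infty$. Writing $\mathbf{X}^{(k)}$ for the vector with these integrals as coordinates, coordinatewise convergence in probability yields $\mathbf{X}^{(k)} \to \mathbf{X}$ in probability, hence in distribution. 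Thus it suffices to show that each $\mathbf{X}^{(k)}$ is infinitely divisible and that infinite divisibility survives this limit.

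For the simple case, fix $k$ and form the common refinement of the finitely many partitions underlying $\tilde f_{t_1}^{(k)},\dots,\tilde f_{t_n}^{(k)}$. This yields pairwise disjoint sets $A_1,\dots,A_m \in \mathcal{S}$ and constants $c_{j,\ell}$ so that $\tilde f_{t_j}^{(k)} = \sum_{\ell=1}^m c_{j,\ell}\ind_{A_\ell}$ for every $j$. Consequently,
$$ \mathbf{X}^{(k)} = \sum_{\ell=1}^m \mathbf{c}_\ell\,\Lambda(A_\ell), \qquad \mathbf{c}_\ell = (c_{1,\ell},\dots,c_{n,\ell})^{\mathsf{T}} \in \R^n. $$
Since the $A_\ell$ are disjoint and $\Lambda$ is independently scattered, the summands $\mathbf{c}_\ell \Lambda(A_\ell)$ are independent $\R^n$-valued random vectors. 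Each of them is infinitely divisible: its characteristic function is $\theta \mapsto \varphi_{\Lambda(A_\ell)}(\langle \theta, \mathbf{c}_\ell\rangle)$, and because $\Lambda(A_\ell)$ is infinitely divisible by assumption, each $m$-th root $\varphi_{\Lambda(A_\ell)}^{1/m}$ is again a characteristic function, whence so is $\theta \mapsto \varphi_{\Lambda(A_\ell)}^{1/m}(\langle\theta,\mathbf{c}_\ell\rangle)$. As a sum of independent infinitely divisible random vectors is infinitely divisible (the product of infinitely divisible characteristic functions is infinitely divisible), $\mathbf{X}^{(k)}$ is infinitely divisible.

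It remains to pass to the limit. Since $\mathbf{X}^{(k)} \to \mathbf{X}$ in distribution and each $\mathbf{X}^{(k)}$ is infinitely divisible, the conclusion follows from the classical fact that the set of infinitely divisible probability measures on $\R^n$ is closed under weak convergence. This establishes infinite divisibility of the law of $\mathbf{X}$, and as $t_1,\dots,t_n$ were arbitrary, the theorem follows.

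The step I expect to be the main obstacle is this last one. Although closure of the infinitely divisible class under weak limits is standard, invoking it requires genuine weak convergence of the \emph{full} random vector $\mathbf{X}^{(k)}$, not merely of its one-dimensional marginals; this is precisely why I first upgrade the coordinatewise convergence in probability furnished by $\Lambda$-integrability to joint convergence in probability of $\mathbf{X}^{(k)}$, which then delivers joint convergence in distribution. A secondary point demanding care is the common-refinement construction at fixed $k$, ensuring that all $n$ simple integrands are expressed over one and the same system of disjoint sets, so that the decomposition into independent infinitely divisible summands is valid.
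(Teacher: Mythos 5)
Your proof is correct, but it takes a genuinely different route from the paper's. The paper never touches the simple-function approximation: it invokes the Horn--Steutel reduction (it suffices that $\varphi_{(t_1,\dots,t_n)}^\gamma$ be a characteristic function for every $\gamma>0$), uses linearity to write $\sum_j x_j X(t_j)$ as the single integral $\int_{\R^d}\bigl(\sum_j x_j f_{t_j}\bigr)\,d\Lambda$, inserts the explicit L\'evy--Khintchine form of the cumulant of such integrals (quoted from Hellmund et al.\ and Rajput--Rosinski), and then observes that raising to the power $\gamma$ amounts to scaling the characteristic triplet $(a,b,F)$ to $(\gamma a,\gamma b,\gamma F)$; since this is again a valid triplet, Proposition 2.1(b) of Rajput--Rosinski yields an infinitely divisible random measure $\Lambda^*$ such that $\varphi^\gamma$ is exactly the characteristic function of $(Y(t_1),\dots,Y(t_n))^{\mathsf{T}}$ with $Y(t)=\int_{\R^d} f_t\,d\Lambda^*$. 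Your argument instead works directly from the definition of the integral: common refinement of the simple integrands over disjoint sets of $\mathcal{S}$, independence from independent scatteredness, infinite divisibility of each $\mathbf{c}_\ell\Lambda(A_\ell)$ via convolution roots, and closure of the infinitely divisible class under weak limits. What your route buys: it is more elementary and self-contained, needing only the definition of $\Lambda$-integrability and three classical stability properties of infinitely divisible laws (linear images, sums of independent vectors, weak limits), and in particular it avoids both the explicit characteristic-function formula for the integral and the existence theorem for random measures with a prescribed triplet --- the two nontrivial external inputs of the paper's proof. What the paper's route buys: it is constructive in a stronger sense, identifying the $\gamma$-th convolution root of the law of $(X(t_1),\dots,X(t_n))^{\mathsf{T}}$ explicitly as the law of integrals of the \emph{same} integrands against a rescaled random measure $\Lambda^*$, and it exhibits the triplet of the finite-dimensional distributions along the way, whereas your limit argument only certifies that the roots exist. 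The two points you flagged as delicate are indeed the right ones, and both hold: joint convergence in probability follows from coordinatewise convergence by a union bound since there are finitely many coordinates, and the common refinement stays inside $\mathcal{S}$ because $\mathcal{S}$ is a $\delta$-ring, hence closed under the finite intersections and relative complements the refinement requires.
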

\begin{proof}
Let $\varphi_{(t_1,...,t_n)}$ be the characteristic function of $(X(t_1),...,X(t_n))^\mathsf{T}$. It is enough to show that $\varphi_{(t_1,...,t_n)}^\gamma$ is a characteristic function for all $\gamma > 0$, cf. the proof of Theorem 3.1. in \cite{HS78}, p. 144/145. Then for each $m \in \N$, there exists a characteristic function $\varphi_m$ such that $\varphi_{(t_1,...,t_n)} = (\varphi_m)^m$. This is the corresponding condition for infinite divisibility given in \cite{CPW95}, p.~111.

Due to the linearity of the integral (\ref{eq:int}) and the fact that any linear combination of $\Lambda$-integrable functions is $\Lambda$-integrable (cf. \cite{JW94}, p. 81), we have
$$\sum_{j=1}^n x_j X(t_j) = \int\limits_{\R^d} \left(\sum\limits_{j=1}^n x_j f_{t_j}(s)\right) \Lambda(ds)$$
and the characteristic function $\varphi_{(t_1,...,t_n)}$ is given by
\begin{eqnarray*}
 \varphi_{(t_1,...,t_n)} (x) &=& \varphi_{\sum_{j=1}^n x_j X(t_j)}(1) \\
 &=& \exp \left\{ia_{\sum x_j f_{t_j}} - \frac{1}{2}b_{\sum x_j f_{t_j}} + \int\limits_{\R^d}\int\limits_{\R} c_{\sum x_j f_{t_j}}(s,y) F(ds,dy)\right\},
\end{eqnarray*}
cf. \cite{HPVJ08}, where
\begin{eqnarray*}
 a_{\sum x_j f_{t_j}} &=& \int_{\R^d} \left(\sum_{j=1}^n x_j f_{t_j}(s)\right) a(ds), \\
 b_{\sum x_j f_{t_j}} &=& \int_{\R^d} \left( \sum_{j=1}^n x_j f_{t_j}(s) \right)^2 b(ds), \\
 c_{\sum x_j f_{t_j}}(s,y) &=& e^{i\sum\limits_{j=1}^n x_j f_{t_j}(s) y}-1-i \sum_{j=1}^n x_j f_{t_j}(s) \tau(y).
\end{eqnarray*}
Let $\gamma > 0$. Then
\begin{eqnarray*}
 \varphi_{(t_1,...,t_n)}^\gamma (x) = \exp \left\{i\gamma a_{\sum x_j f_{t_j}} - \frac{1}{2}\gamma b_{\sum x_j f_{t_j}} + \int\limits_{\R^d}\int\limits_{\R} c_{\sum x_j f_{t_j}}(s,y) \gamma F(ds,dy)\right\}
\end{eqnarray*}
with
\begin{eqnarray*}
 \gamma a_{\sum x_j f_{t_j}} &=& \int_{\R^d} \left(\sum_{j=1}^n x_j f_{t_j}(s)\right) \gamma a(ds), \\
 \gamma b_{\sum x_j f_{t_j}} &=& \int_{\R^d} \left( \sum_{j=1}^n x_j f_{t_j}(s) \right)^2 \gamma b(ds).
\end{eqnarray*}

Since $a^*:=\gamma a$ is a $\sigma$-additive set function on $\mathcal{S}$, $b^*:=\gamma b$ is a measure on $\mathcal{B}(\R^d)$, and $F^*(dr,A):=\gamma F(dr,A)$ is a measure on $\mathcal{B}(\R^d)$ for fixed $dr$ and a L\'evy measure on $\mathcal{B}(\R)$ for each fixed $A \in \mathcal{B}(\R^d)$, there exists an infinitely divisible random measure $\Lambda^*$ with characteristic triplet $(a^*,b^*,F^*)$, cf. Proposition 2.1.(b) in \cite{RR89}. Therefore, $\varphi_{(t_1,...,t_n)}^\gamma$ is the characteristic function of $(Y(t_1),...,Y(t_n))^\mathsf{T}$ with
\begin{equation}
 Y(t) = \int\limits_{\R^d} f_t(x) \Lambda^*(dx). \nonumber
\end{equation}
\end{proof}


\begin{thebibliography}{}

\bibitem{CPW95}
Cambanis, S., Podg\'orski, K. and Weron, A., Chaotic behavior of infinitely divisible processes, \textit{Studia Mathematica}, 115(2), 109-127, 1995.

\bibitem{HPVJ08}
Hellmund, G., Proke\v{s}ov\'{a}, M. and Vedel Jensen, E.~B., L\'evy-based Cox point processes, \textit{Adv. in Appl. Probab.}, 40(3): 603-629, 2008.

\bibitem{RR89}
Rajput, B.~S. and Rosinski, J., Spectral Representations of Inifinitely Divisible Processes, \textit{Probab. Th. Rel. Fields}, 82: 451-487, 1989.

\bibitem{JW94}
Janicki, A. and Weron, A., Simulation and Chaotic Behavior of $\alpha$-Stable Stochastic Processes, \textit{Marcel Dekker}, New York, 1994.

\bibitem{HS78}
Horn, R.~A. and Steutel, F.~W., On multivariate infinitely divisible distributions, \textit{Stochastic Processes and Their Applications}, 6: 139-151, 1978.

\end{thebibliography}
\end{document}